\title{Basic Morita equivalences and a conjecture of Sasaki for cohomology of block algebras of finite groups}
\author{
        CONSTANTIN-COSMIN TODEA\\
        \emph{Technical University of Cluj-Napoca}\\
        \emph{Department of Mathematics, Str. G. Baritiu 25}\\
 \emph{Cluj-Napoca 400027, Romania}\\
 \emph{Constantin.Todea@math.utcluj.ro}
}
\date{\today}
\newcommand{\Hom}{\mathrm{Hom}}
\newcommand{\Id}{\mathrm{Id}}
\newcommand{\op}{\mathrm{op}}
\newcommand{\Res}{\mathrm{Res}}
\newcommand{\tr}{\mathrm{tr}}
\newcommand{\End}{\mathrm{End}}
\newcommand{\Ext}{\mathrm{Ext}}
\newcommand{\Ind}{\mathrm{Ind}}
\newcommand{\BaR}{\mathrm{Bar}}
\newcommand{\HH}{\mathrm{HH}}
\newcommand{\Hc}{\mathrm{H}}
\newcommand{\Ker}{\mathrm{Ker}}
\newcommand{\Ima}{\mathrm{Im}}
\newcommand{\res}{\mathrm{res}}
\newcommand{\Br}{\mathrm{Br}}
\newtheorem{thm}{Theorem}[section]
\newtheorem{theorem}[thm]{Theorem}
\newtheorem{corollary}[thm]{Corollary}
\newtheorem{lemma}[thm]{Lemma}
\newtheorem{proposition}[thm]{Proposition}
\newtheorem{definition}[thm]{Definition}
\newtheorem{nim}[thm]{}
\newtheorem{remark}[thm]{Remark}
\begin{document}
\maketitle
Keywords: finite group; block; group cohomology; Hochschild cohomology;  transfer map; basic Morita equivalence

2010 Mathematics Subject Classification: 20C20, 16EXX

\begin{abstract}
We show, using basic Morita equivalences between block algebras of finite groups, that the Conjecture of H. Sasaki from \cite{Sas} is true for a new class of blocks called nilpotent covered blocks. When this Conjecture is true we define some transfer maps between cohomology of block algebras and we analyze them through transfer maps between Hochschild cohomology algebras.
\end{abstract}

\section{Introduction}\label{sec1}

Let $G$ be a finite group and let $k$ be an
algebraically closed  field of prime characteristic $p>0$ dividing the order of $G$. Let $kGb$ be a block algebra where $b$ is the primitive idempotent in the center of $kG$, $b\in Z(kG)$, with defect group $P$. Let $P_{\gamma}$ be a defect pointed group of $G_{\{b\}}$, $(P,e_P)$ be a maximal $b$-Brauer pair associated to $P_{\gamma}$ and let $i\in \gamma$ be a source idempotent. For any subgroup $R$ of $P$ there is a unique block $e_R$ of $C_G(R)$ such that $\Br_R(i)e_R \neq 0$. Then $(R, e_R)$ is a $b$-Brauer pair and $e_R$ is also the unique block of $C_G(R)$ such that $(R, e_R) \leq (P, e_P)$. We define $\mathcal{F}_{(P,e_P)}(G,b)$ as the  category which has as objects the set of subgroups of $P$; for any two subgroups $R,S$ of $P$ the set of morphisms from $R$ to $S$ in  $\mathcal{F}_{(P,e_P)}(G,b)$ is the set of (necessarily injective) group homomorphisms $\varphi:R\rightarrow S$ for which there is an element $x \in G$ satisfying $\varphi(u)=xux^{-1}$ for all $u\in R$ and satisfying $^x(R, e_R)\leq (S, e_S)$. The category $\mathcal{F}_{(P,e_P)}(G,b)$ is equivalent to the Brauer category of $b$ with respect to the choice of $(P,e_P)$ and is a saturated fusion system. More than fifteen years ago M. Linckelmann defines in \cite{LiTr} the cohomology algebra of the block $b$, denoted $\Hc^*(G,b,P_{\gamma})$ which can  also be viewed now as the cohomology algebra of a fusion system $\Hc^*(\mathcal{F}_{(P,e_P)}(G,b))$, see \cite{BLO} for cohomology of fusion systems. The $P$-interior algebra $ikGi$ is called the source algebra of $kGb$. As $kP-kP$-bimodule $ikGi$ is a direct summand of $kG$, hence there is $Y_{G,b,P_{\gamma}}\subseteq [P\backslash G /P]$ such that $$ikGi\cong \bigoplus_{g\in Y_{G,b,P_{\gamma}}}k[PgP],$$
as $kP-kP$-bimodules, where $[P\backslash G/P]$ is a system of representatives of double cosets of $P$ in $G$. The set $Y_{G,b,P_{\gamma}}$ has the property that  contains  a system of representatives $[N_G(P_{\gamma})/PC_G(P)]$ for any block $b$.

In \cite{Sas} H. Sasaki defines a linear map as follows
$$t_{Y_{G,b,P_{\gamma}}}:\Hc^*(P,k)\rightarrow\Hc^*(P,k); ~~~~~~~[\zeta]\mapsto\sum_{g\in Y_{G,b,P_{\gamma}}}\tr_{P\cap^gP}^P\res_{P\cap^gP}^{^gP}{}^g[\zeta],$$
and he proposes the following conjecture.

\textbf{Conjecture.} With the above notations it follows that
$$\Hc^*(G,b,P_{\gamma})=t_{Y_{G,b,P_{\gamma}}}(\Hc^*(P,k)).$$
For shortness we also use the notation $$t_{Y_{G,b,P_{\gamma}}}=\sum_{g\in Y_{G,b,P_{\gamma}}}t_g$$ where for $g \in Y_{G,b,P_{\gamma}}$ the graded linear map $t_g$ is defined by
 $$t_g:\Hc^*(P,k)\rightarrow\Hc^*(P,k),~~~~~~~[\zeta]\mapsto\tr_{P\cap^gP}^P\res_{P\cap^gP}^{^gP}{}^g[\zeta].$$
Since we will we work with more than one block algebra we are forced  to use a little cumbersome notation for $t_{Y_{G,b,P_{\gamma}}}$ as opposed to the original one from \cite{Sas}. Broto  et al. \cite{BLO} show that for any saturated fusion system on $P$ there is a $P-P$-biset that induces a linear map analogous to $t_{ Y_{G,b,P_{\gamma}}}$ whose image is the cohomology of the saturated fusion system; such a biset is called \emph{characteristic biset}. For the convenience of the reader we begin with a well-known remark giving a class of blocks for which the above Conjecture is true. The arguments for the proof of this remark are the same with the arguments from \cite[Example 2]{Sas}. For completeness we point out that  we denote by $\mathcal{F}_P(N_G(P_{\gamma}))$ the fusion system on $P$ in $N_G(P_{\gamma})$ (not necessarily saturated) with objects the subgroups of $P$ and morphisms given by conjugation in $N_{G}(P_{\gamma})$; we use $^gR=gRg^{-1}$ where $R$ is a subgroup of $P$ and $g\in G$.

\begin{remark}\label{remcontrolnormalizer}With the above notations we assume that $b$ is a block for which the inertial group $N_G(P_{\gamma})$ controls the fusion of $b$-Brauer pairs, $\mathcal{F}_{(P,e_P)}(G,b)\subseteq \mathcal{F}_{P}(N_G(P_{\gamma}))$. Then the above Conjecture is true.
In particular, we have the following list of blocks for which the inertial group controls the fusion of $b$-Brauer pairs:
\begin{itemize}
  \item $P$ is normal in $G$; this is \cite[Example 1]{Sas}.
  \item $P$ is abelian; this is \cite[Example 2]{Sas}.
  \item $P$ is a $p$-group such that the hyperfocal subgroup of $P_{\gamma}$ is a subgroup in the center of $P$ (in particular this happens when $P$ is abelian); this follows from the proof of \cite[Theorem 1,(ii)]{Wat}. Recall that by \cite[1.7]{PuHyp} the subgroup $\langle[U,O^p(N_G(U,e_U))]\mid U\leq P\rangle$ is called the \emph{hyperfocal subgroup} of $P_{\gamma}$, where $O^p(N_G(U,e_U))$ is the unique smallest normal subgroup of $N_G(U,e_U)$ of $p$-power index.
\end{itemize}
\end{remark}
In the main theorem of this article we will show how basic Morita equivalent blocks give new information regarding the image of the above linear map in the Conjecture, if a condition is satisfied. This theorem provides a  method for proving Sasaki's Conjecture for the so-called nilpotent covered blocks. Let $b$ be a block with the above notations and let $H$ be a different finite group. Let $c$ be a block of $kH$ with defect pointed group $Q_{\delta}$. The similar notations and definitions are considered for $c$: a source idempotent is $j\in \delta$, the interior $Q$-algebra $jkHj$ is the source algebra, the Brauer category is $\mathcal{F}_{(Q,f_Q)}(H,c)$, the cohomology algebra is $\Hc^*(H,c,Q_{\delta})$, etc. Let $M$ be a $kG-kH$-bimodule such that $bMc=M$ and $M$ is indecomposable considered as $k[G\times H^{op}]$-module. We take $P'\leq G\times H$ to be the vertex of $M$ and $N$ is a $kP'$-indecomposable source of $M$. We say that $M$ defines a \emph{basic Morita equivalence} between $b$ and $c$ if $M$ gives a Morita equivalence such that the equivalent conditions of \cite[Corollary 7.4]{PuBook} are satisfied. For example this happens if $p$ does not divides the $k$-dimension  of $N$ or if $S=\End_k(N)$ is a Dade $P'$-algebra.
\begin{theorem}\label{thm12} Let $b,c$ be two blocks with the above notations. We assume that $b$ is basic Morita equivalent to $c$. If $Y_{H,c,Q_{\delta}}=[N_H(Q_{\delta})/QC_H(Q)]$  then $$\Ima t_{Y_{G,b,P_{\gamma}}}\subseteq \Hc^*(P,k)^{N_G(P_\gamma)/PC_G(P)}.$$
\end{theorem}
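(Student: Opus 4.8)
Proof plan. The plan is to view $t_{Y_{G,b,P_\gamma}}$ as the graded linear endomorphism of $\Hc^*(P,k)$ induced by the source algebra $ikGi$ regarded as a $p$-permutation $kP$-$kP$-bimodule — legitimate because $t_g$ is precisely the map attached to the transitive biset $k[PgP]$ and $ikGi\cong\bigoplus_{g\in Y_{G,b,P_\gamma}}k[PgP]$ — and then to recompute it from the decomposition of $ikGi$ afforded by the basic Morita equivalence. First I recall that, by \cite[Corollary 7.4]{PuBook}, the basic Morita equivalence provides an isomorphism $\phi\colon Q\to P$ (coming from the vertex $P'$, a twisted diagonal subgroup of $G\times H$) together with an isomorphism of interior $P$-algebras $ikGi\cong S\otimes_k jkHj$, where $S=\End_k(N)$ is a Dade $P'$-algebra which we regard as a Dade $P$-algebra via the vertex isomorphism. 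I shall use two consequences: $\phi$ induces an isomorphism of fusion systems $\mathcal F_{(Q,f_Q)}(H,c)\xrightarrow{\sim}\mathcal F_{(P,e_P)}(G,b)$, hence an isomorphism $N_H(Q_\delta)/QC_H(Q)\cong N_G(P_\gamma)/PC_G(P)$ compatible with the conjugation action on $P\cong Q$; and the Dade module $N$ is stable under $\mathcal F_{(P,e_P)}(G,b)$, i.e.\ ${}^{\psi}N\cong N$ for every $\psi\in\Aut_{\mathcal F_{(P,e_P)}(G,b)}(P)$, so that the permutation $kP$-module $S$ (with conjugation action) has $\Aut_{\mathcal F_{(P,e_P)}(G,b)}(P)$-invariant multiplicities.

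Next I would exploit the hypothesis. Since $Y_{H,c,Q_\delta}=[N_H(Q_\delta)/QC_H(Q)]$ and each representative $h$ normalises $Q$, one has $QhQ=hQ$, so that, transported along $\phi$,
$$jkHj\ \cong\ \bigoplus_{h\in[N_H(Q_\delta)/QC_H(Q)]}{}_{1}(kP)_{\psi_h}\qquad\text{as }kP\text{-}kP\text{-bimodules},$$
where $\psi_h=\phi\,c_h\,\phi^{-1}\in\Aut(P)$ and $\{\psi_h\}$ is a transversal of $\mathrm{Inn}(P)$ in $\Aut_{\mathcal F_{(P,e_P)}(G,b)}(P)$. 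Tensoring with $S$ and applying the projection formula to the $k[P\times P]$-module ${}_{1}(kP)_{\psi_h}\cong\Ind^{P\times P}_{L_h}k$ ($L_h\cong P$ a twisted diagonal) yields
$$ikGi\ \cong\ \bigoplus_{h}\Ind^{P\times P}_{L_h}\bigl(\Res^{P\times P}_{L_h}S\bigr),$$
and by the stability recalled above each $\Res^{P\times P}_{L_h}S$ is a permutation $kP$-module whose isomorphism type is that of $S$ with conjugation action, independently of $h$; write $S\cong\bigoplus_{R\le P}m_R\,k[P/R]$. Decomposing each summand into transitive bisets and using the standard formula for the map attached to a twisted diagonal biset, this gives
$$t_{Y_{G,b,P_\gamma}}([\zeta])\ =\ \sum_{R\le P}m_R\sum_{\psi}\tr^{P}_{R}\,{}^{\psi}\res^{P}_{\psi^{-1}(R)}[\zeta],$$
the inner sum running over the transversal $\{\psi_h\}$ (each summand depending only on the class of $\psi$ modulo $\mathrm{Inn}(P)$).

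Then the invariance is immediate. For $n\in N_G(P_\gamma)$ put $\chi=c_n|_{P}\in\Aut_{\mathcal F_{(P,e_P)}(G,b)}(P)$; naturality of transfer and restriction gives ${}^{\chi}\bigl(\tr^{P}_{R}\,{}^{\psi}\res^{P}_{\psi^{-1}(R)}[\zeta]\bigr)=\tr^{P}_{\chi(R)}\,{}^{\chi\psi}\res^{P}_{(\chi\psi)^{-1}(\chi(R))}[\zeta]$. As $\psi$ runs over the transversal, $\chi\psi$ runs over a transversal of $\mathrm{Inn}(P)$ in $\Aut_{\mathcal F_{(P,e_P)}(G,b)}(P)$, and substituting $R\mapsto\chi(R)$ in the outer sum does not change the coefficients because $m_{\chi(R)}=m_R$. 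Hence ${}^{\chi}t_{Y_{G,b,P_\gamma}}([\zeta])=t_{Y_{G,b,P_\gamma}}([\zeta])$ for every $\zeta$ and every $n\in N_G(P_\gamma)$, which is exactly $\Ima t_{Y_{G,b,P_\gamma}}\subseteq\Hc^*(P,k)^{N_G(P_\gamma)/PC_G(P)}$.

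The step I expect to be the main obstacle is the $\mathcal F$-stability of the Dade module $N$, together with the resulting invariance of the multiplicities $m_R$ under $\Aut_{\mathcal F_{(P,e_P)}(G,b)}(P)$: this is what forces the whole expression above into the invariants, whereas without it one only gets that $t_{Y_{G,b,P_\gamma}}$ is $N_G(P_\gamma)/PC_G(P)$-equivariant and hence that $\Ima t_{Y_{G,b,P_\gamma}}$ is a submodule. A secondary, more bookkeeping, point is to fix the biset/bimodule conventions so that the endomorphism of $\Hc^*(P,k)$ induced by $ikGi$ coincides on the nose with Sasaki's $\sum_g t_g$, and so that the projection-formula identification is compatible with the interior $P$-algebra structure on $S\otimes_k jkHj$.
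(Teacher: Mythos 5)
Your reduction rests on the claim that a basic Morita equivalence yields an isomorphism of interior $P$-algebras $ikGi\cong S\otimes_k jkHj$. That is not what Puig's theory gives: \cite[Theorem 6.9, Corollary 7.4]{PuBook} provide embeddings $ikGi\to S\otimes jkHj$ and $jkHj\to S^{\op}\otimes ikGi$, i.e.\ only that $ikGi$ is a direct summand of $S\otimes jkHj$ as $k[P\times P^{\op}]$-modules; and the isomorphism is false in general, since applying the same claim to the inverse equivalence (given by the dual bimodule, whose Dade algebra is $S^{\op}$) would force $\dim_k S=1$. Concretely, for the basic Morita equivalence between a nilpotent block with non-trivial endopermutation source and $kP$ one has $ikGi\cong S\otimes kP$ in one direction, but certainly not $kP\cong S^{\op}\otimes ikGi$. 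This matters because your explicit formula $t_{Y_{G,b,P_\gamma}}=\sum_{R}m_R\sum_{\psi}\tr^P_R\,{}^{\psi}\res^P_{\psi^{-1}(R)}$ uses the \emph{whole} of $S\otimes jkHj$; with only a direct summand you merely know that each $t_x$, $x\in Y_{G,b,P_\gamma}$, is the map attached to \emph{some} of the transitive pieces of $\bigoplus_h\Ind_{L_h}(\Res_{L_h}S)$, and your final averaging step (replacing $\psi$ by $\chi\psi$, $R$ by $\chi(R)$, and using $m_{\chi(R)}=m_R$) collapses: a sub-multiset of an $\Aut_{\mathcal F}(P)$-stable family of summands need not itself be stable, so invariance of the image does not follow. (You also leave the $\mathcal F$-stability of the multiplicities $m_R$ unproved, but that is a secondary point compared with the isomorphism claim.)

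The paper closes exactly this gap by arguing summand-by-summand rather than globally: from $k[PxP]\mid S\otimes k[PyP]$ with $y\in N_H(P_\delta)$ (this is where the hypothesis $Y_{H,c,Q_{\delta}}=[N_H(Q_{\delta})/QC_H(Q)]$ enters) it compares vertices, obtaining ${}^{(x,1)}\Delta({}^{x^{-1}}P\cap P)\leq{}^{(a,b)(y,1)}\Delta(P)$ with $a,b\in P$, hence $ayb=cx$ for some $c\in C_G(P\cap{}^xP)$; this gives $t_x([\zeta])=|P:{}^{a^{-1}}(P\cap{}^xP)|\,t_y([\zeta])$, which vanishes in characteristic $p$ unless $x\in N_G(P_\gamma)$, and since $Y_{G,b,P_\gamma}$ contains $[N_G(P_\gamma)/PC_G(P)]$ the whole map becomes the relative trace $\tr_{PC_G(P)}^{N_G(P_\gamma)}$, whose image visibly lies in $\Hc^*(P,k)^{N_G(P_\gamma)/PC_G(P)}$. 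If you replace your isomorphism step by this divisibility-plus-vertex-comparison argument, your plan essentially becomes the paper's proof; as written, the isomorphism claim is a genuine gap that invalidates the invariance conclusion.
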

A block $c$ with  normal defect group in $H$ satisfies the condition $$Y_{H,c,Q_{\delta}}=[N_H(Q_{\delta})/QC_H(Q)]$$  from  Theorem \ref{thm12}. In particular the same arguments as for the proof of Remark \ref{remcontrolnormalizer} assure us that for these blocks the Conjecture of Sasaki is true.
We say that  $b$ is a \emph{nilpotent covered} block if there is $\widetilde{G}$ a finite group such that $G$ is a normal subgroup of $\widetilde{G}$ and there is $\widetilde{b}$ a nilpotent block of $k\widetilde{G}$ which covers $b$. We know that $e_P$ is lifted to a block $\widehat{e_P}$ of the inertial group with the same defect $P$ and \cite[Corrollary 4.3]{PuNilext} assure us  that a nilpotent covered block is basic Morita equivalent to $\widehat{e_P}$ as block of $N_G(P_{\gamma})$ which has the same defect pointed group $P_{\gamma}$. Since $b$ is basic Morita equivalent to $\widehat{e_P}$ we know from \cite{PuBook} that $\mathcal{F}_{(P,e_P)}(G,b)$  is isomorphic to $\mathcal{F}_{(P,e_P)}(N_G(P_{\gamma}),\widehat{e_P})$ as finite categories hence by Theorem \ref{thm12} we obtain
 $$ \Ima t_{Y_{G,b,P_{\gamma}}}\subseteq\Hc^*(P,k)^{N_G(P_{\gamma})/PC_G(P)}=\Hc^*(N_G(P_{\gamma}),\widehat{e_P},P_{\gamma})\cong \Hc^*(G,b,P_{\gamma}).$$
Since the last isomorphism is an isomorphism of graded vector spaces of finite type we obtain the next corollary.
\begin{corollary}\label{cornilpext}
For any nilpotent covered block the Conjecture of Sasaki is true.
\end{corollary}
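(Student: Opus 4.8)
The plan is to reduce Corollary~\ref{cornilpext} to the already understood case of a block with normal defect group, using the basic Morita equivalence supplied by nilpotent extension theory, and then to upgrade the inclusion coming from Theorem~\ref{thm12} to the equality predicted by the Conjecture by means of a finiteness argument. The genuinely new content is packed into Theorem~\ref{thm12} and into \cite[Corollary 4.3]{PuNilext}; what remains is bookkeeping about fusion systems and dimensions.

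First I would unfold the hypothesis. Since $b$ is nilpotent covered, there is a finite group $\widetilde{G}$ containing $G$ as a normal subgroup and a nilpotent block $\widetilde{b}$ of $k\widetilde{G}$ covering $b$, and \cite[Corollary 4.3]{PuNilext} then gives that $b$ is basic Morita equivalent to the block $\widehat{e_P}$ of $kN_G(P_{\gamma})$ lifting $e_P$, which has the same defect pointed group $P_{\gamma}$. As $N_G(P_{\gamma})\leq N_G(P)$, the block $\widehat{e_P}$ has normal defect group $P$, so it satisfies the hypothesis $Y_{H,c,Q_{\delta}}=[N_H(Q_{\delta})/QC_H(Q)]$ of Theorem~\ref{thm12}. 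Applying that theorem with $(H,c,Q_{\delta})=(N_G(P_{\gamma}),\widehat{e_P},P_{\gamma})$ yields
$$\Ima t_{Y_{G,b,P_{\gamma}}}\subseteq \Hc^*(P,k)^{N_G(P_{\gamma})/PC_G(P)}.$$

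Next I would identify the right-hand side with $\Hc^*(G,b,P_{\gamma})$. Because $\widehat{e_P}$ has normal defect group, $\Hc^*(N_G(P_{\gamma}),\widehat{e_P},P_{\gamma})$ is precisely the subalgebra of $N_G(P_{\gamma})/PC_G(P)$-invariants of $\Hc^*(P,k)$. On the other hand, the basic Morita equivalence induces, via \cite{PuBook}, an isomorphism $\mathcal{F}_{(P,e_P)}(G,b)\cong\mathcal{F}_{(P,e_P)}(N_G(P_{\gamma}),\widehat{e_P})$, hence an isomorphism of graded algebras of finite type $\Hc^*(G,b,P_{\gamma})\cong\Hc^*(N_G(P_{\gamma}),\widehat{e_P},P_{\gamma})$. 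Since $\Hc^*(G,b,P_{\gamma})$ consists of $\mathcal{F}_{(P,e_P)}(G,b)$-stable classes, it is in particular contained in $\Hc^*(P,k)^{N_G(P_{\gamma})/PC_G(P)}$; a graded subspace that is abstractly isomorphic, degree by degree in finite dimension, to the ambient space must coincide with it. Thus $\Hc^*(G,b,P_{\gamma})=\Hc^*(P,k)^{N_G(P_{\gamma})/PC_G(P)}$, and combining with the display above, $\Ima t_{Y_{G,b,P_{\gamma}}}\subseteq\Hc^*(G,b,P_{\gamma})$.

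Finally I would recall the reverse inclusion, which holds for every block: the $P$--$P$-biset afforded by $ikGi$ is generated by morphisms of $\mathcal{F}_{(P,e_P)}(G,b)$ and has index prime to $p$ (equivalently $\Br_{\Delta P}(ikGi)\neq 0$, since $\Br_P(i)\neq 0$), so for an $\mathcal{F}$-stable class $[\zeta]$, stability together with the projection formula gives $t_g([\zeta])=[P:P\cap{}^gP]\,[\zeta]$ for every $g\in Y_{G,b,P_{\gamma}}$, whence $t_{Y_{G,b,P_{\gamma}}}([\zeta])=(\dim_k(ikGi)/|P|)\,[\zeta]$ with the scalar invertible in $k$; hence $\Hc^*(G,b,P_{\gamma})\subseteq\Ima t_{Y_{G,b,P_{\gamma}}}$. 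The two inclusions give $\Hc^*(G,b,P_{\gamma})=\Ima t_{Y_{G,b,P_{\gamma}}}$, i.e. the Conjecture of Sasaki for $b$. The step I expect to demand the most care is the middle one: verifying that $\Hc^*(P,k)^{N_G(P_{\gamma})/PC_G(P)}$ really is $\Hc^*(N_G(P_{\gamma}),\widehat{e_P},P_{\gamma})$, keeping track of which automorphisms of $P$ the isomorphism of fusion systems involves, and making the finite-type dimension count uniform across all cohomological degrees.
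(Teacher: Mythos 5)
Your argument is correct and takes essentially the same route as the paper: Puig's \cite[Corollary 4.3]{PuNilext} reduces $b$ to its Brauer correspondent $\widehat{e_P}$ with normal defect group, Theorem \ref{thm12} then gives $\Ima t_{Y_{G,b,P_{\gamma}}}\subseteq \Hc^*(P,k)^{N_G(P_{\gamma})/PC_G(P)}=\Hc^*(N_G(P_{\gamma}),\widehat{e_P},P_{\gamma})\cong\Hc^*(G,b,P_{\gamma})$, and the degreewise dimension count over the finite-type graded isomorphism coming from preservation of the fusion system closes the argument. The only difference is that you make explicit the reverse inclusion $\Hc^*(G,b,P_{\gamma})\subseteq\Ima t_{Y_{G,b,P_{\gamma}}}$ (fusion-stable classes are multiplied by the $p$-invertible scalar $\dim_k(ikGi)/|P|$), which the paper leaves implicit, relying on \cite[Remark 1]{Sas}.
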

It might be possible to obtain a shorter proof for the above corollary since basic Morita equivalence preserves the Brauer categories and one of the blocks has normal defect group but we prefer the use of Theorem \ref{thm12} to emphasize a new method.
Section \ref{sec2} is devoted for the proof of this theorem. The aim of Section \ref{sec3} is to fill a gap in the literature, since for experts the fact that Hochschild cohomology of group algebras is a Mackey functor is known. So it is worth spending some time to analyze and to give an explicit approach for the maps defining the Mackey functor in Section \ref{sec3}, using bar resolution. The only new result is Proposition \ref{propcompconjdelta} which is used  in the proof of Theorem \ref{thmcomptran}, theorem which will be exposed below.
We propose an approach for defining  some transfer maps between cohomology of block algebras, maps which we define below. For this definition we need the existence of the map $t_{ Y_{G,b,P_{\gamma}}}$. In Section \ref{sec4} we will study these maps. Let $b,c$ be blocks of $kG$, respectively  $kH$ with defect groups $P$ respectively $Q$ such that $Q\leq P$ and $H\leq G$. We take $P_{\gamma},$ respectively $Q_{\delta}$ as defect pointed groups and we denote by $(Q,f_Q)$ a maximal $(H,c)$-Brauer pair associated to $Q_{\delta}$. Again $j\in \delta$ is a source idempotent for $c$, the fusion system associated to $c$ is $\mathcal{F}_{(Q,f_Q)}(H,c)$  and $jkHj$ is the source algebra. So, if the Conjecture is true for $b$  we can complete the following commutative diagram
\begin{displaymath}
 \xymatrix{\Hc^*(H,c,Q_{\delta})\ar@{.>}[rr]^{ \tr^b_c}\ar@{^{(}->}[d] && \Hc^*(G,b,P_{\gamma}) \ar@{<<-}[d]^{t_{Y_{G,b,P_{\gamma}}}} \\
                                                     \Hc^*(Q,k)\ar[rr]^{\tr_Q^P}
                                                     &&\Hc^*(P,k)
                                            }.
\end{displaymath}
\begin{definition}\label{defntrbc} With the above notations we assume that the Conjecture is true for $b$. We define the graded $k$-linear map
$$\tr^b_c:\Hc^*(H,c,Q_{\delta})\rightarrow\Hc^*(G,b,P_{\gamma});~~~~~~~[\zeta]\mapsto \tr^b_c([\zeta])=t_{Y_{G,b,P_{\gamma}}}(\tr_Q^P([\zeta])).$$
Explicitly $\tr^b_c([\zeta])=\sum_{g\in Y_{G,b,P_{\gamma}}}\tr^P_{P\cap^gP}\res^{^gP}_{P\cap^gP}{}^g(\tr^P_Q([\zeta]))$. We call $\tr^b_c$ a transfer map in cohomology of block algebras.
\end{definition}
Some ideas for defining these transfer maps are suggested in \cite{SasII}. For defining  a restriction map between cohomology of block algebras, we need to assume that
$\mathcal{F}_{(Q,f_Q)}(H,c)$ is a subsystem of $\mathcal{F}_{(P,e_P)}(G,b)$. In Section \ref{sec4} and in the rest of this Introduction we work with blocks satisfying this condition. This inclusion and $\res^P_Q$ induce a well-defined  restriction map between cohomology algebras of blocks, denoted
$$\res^b_c:\Hc^*(G,b,P_{\gamma})\rightarrow\Hc^*(H,c,Q_{\delta}); ~~~~~~~[\zeta]\mapsto \res^b_c([\zeta])=\res^P_Q([\zeta]).$$
In order that the definition of the transfer map $\tr_c^b$ to be reasonable we need that good properties such as Frobenius reciprocity and transitivity law $$\tr_c^b\circ t_{Y_{H,c,P_{\delta}}}=t_{Y_{G,b,P_{\gamma}}}$$ to hold. We prove Frobenius reciprocity in Proposition \ref{propreciprtransf} but the law of transitivity is proved  in Proposition \ref{proptranslaws} for $b$ and $c$ blocks such that $$\Ker ~t_{Y_{H,c,P_{\delta}}}\subseteq \Ker~t_{Y_{G,b,P_{\gamma}}}.$$ In the next theorem, which will be proved in Section \ref{sec4}, we try to understand transfer maps in block cohomology via transfer maps between Hochschild cohomology of block algebras, which are in Brauer correspondence; therefore from now we take $Q=P$. For the rest of Section \ref{sec4} and of this Introduction  let $H$ be a subgroup of $G$ containing $PC_G(P)$ where $P$ is the common defect group of $b$ and $c$, where $c$ is the Brauer correspondent of $b$. We choose $i\in \gamma$ and $j\in \delta$ such that the source modules $A=kGi, B=kHj$ are in Green correspondence with respect to $(G\times P^{op},\Delta P, H\times P^{op})$. We denote by $L=L(kGb,kHc)$ the Green correspondent of $kHc$, viewed as an indecomposable $kH-kH$-bimodule with respect to $(G\times H^{op},\Delta P, H\times H^{op})$. In \cite{LiTr} Linckelmann defines the subalgebra of stable elements in the Hochschild cohomology algebra of a symmetric algebra and transfer maps between Hochschild cohomology of symmetric algebras. For example, since $ikGi$ is a $kP-kP$-bimodule, projective as left and right module, there is a transfer map associated to $ikGi$ denoted $t_{ikGi}:\HH^*(kP)\rightarrow\HH^*(kP)$. We denote by $T$  the transfer $t_{ikGi}$ restricted to $\HH^*_{B^*}(kP)\cap\delta_P(\Hc^*(P,k))$ (which is isomorphic to $\Hc^*(H,c,P_{\delta})$ by \cite[Theorem 1]{Sas}). Recall that $\HH^*_{B^*}(kP)$ is the subalgebra of $B^*$-stable elements; that is $[\zeta]\in \HH^*_{B^*}(kP)$ if there is $[\theta]\in\HH^*(kHc)$ such that $[\zeta]\otimes_{kP}[\Id_{B^*}]=[\Id_{B^*}]\otimes_{kHc} [\theta]$ in $\Ext_{kP\otimes_k (kHc)^{op}}(B^*,B^*)$, see \cite[Definition 2]{Sas}.
\begin{theorem}\label{thmcomptran} With the above conditions we assume that $\mathcal{F}_{(P,f_P)}(H,c)\subseteq \mathcal{F}_{(P,e_P)}(G,b)$. If we assume that the Conjecture is true for $b$ then the following diagram of graded $k$-linear maps is commutative
\begin{displaymath}
 \xymatrix{\Hc^*(H,c,P_{\delta})\ar[rr]^{ R_B\circ T\circ\delta_P}\ar[d]^{\tr^b_c} && \HH^*_{L^*}(kHc) \ar@{>->>}[d]^{R_L} \\
                                                     \Hc^*(G,b,P_{\gamma})\ar[rr]^{R_A\circ\delta_P}
                                                     &&\HH^*_{L}(kHc)
                                            }.
\end{displaymath}
\end{theorem}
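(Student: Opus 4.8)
The plan is to reduce the commutativity of the square to an identity between composites of transfer maps on $\HH^*(kP)$ and $\HH^*(kHc)$, and then to deduce that identity from Sasaki's description of block cohomology as stable elements of $\HH^*(kP)$, from the functoriality of Linckelmann's transfer maps, and from the Green correspondence between the source modules $A$ and $B$. First I would unwind the two paths. Since the Conjecture holds for $b$ by hypothesis, $\tr^b_c$ is given by Definition \ref{defntrbc}, and because $Q=P$ here we have $\tr^P_Q=\id$, so $\tr^b_c=t_{Y_{G,b,P_{\gamma}}}$ on $\Hc^*(H,c,P_{\delta})$. Under the identification $\Hc^*(H,c,P_{\delta})\cong\HH^*_{B^*}(kP)\cap\delta_P(\Hc^*(P,k))$ of \cite[Theorem 1]{Sas} the composite $T\circ\delta_P$ in the top row is $t_{ikGi}\circ\delta_P$ restricted to $\Hc^*(H,c,P_{\delta})$. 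The key compatibility is that $\delta_P$ intertwines Sasaki's map with the Hochschild transfer of the source algebra, that is $\delta_P\circ t_{Y_{G,b,P_{\gamma}}}=t_{ikGi}\circ\delta_P$ on $\Hc^*(P,k)$; this follows from the bimodule decomposition $ikGi\cong\bigoplus_{g\in Y_{G,b,P_{\gamma}}}k[PgP]$, from additivity of transfer maps in the bimodule variable, and from the identity $t_{k[PgP]}\circ\delta_P=\delta_P\circ t_g$, which is precisely where the Mackey-functor analysis of Section \ref{sec3}, and in particular Proposition \ref{propcompconjdelta}, is used. With this in hand the commutativity of the square is equivalent to $R_L\circ R_B=R_A$ on the image of $t_{ikGi}\circ\delta_P$ restricted to $\Hc^*(H,c,P_{\delta})$, i.e. on $\delta_P(\tr^b_c(\Hc^*(H,c,P_{\delta})))$.

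Next I would use the construction of the transfer maps $R_A$, $R_B$, $R_L$ following \cite{LiTr} together with their additivity and transitivity. The source modules $A=kGi$ and $B=kHj$ are Green correspondents with respect to $(G\times P^{op},\Delta P,H\times P^{op})$, which decomposes the restriction of $A$ to $H\times P^{op}$ as $B$ together with bimodule summands whose vertices are proper subgroups of $\Delta P$, and the defining property of $L=L(kGb,kHc)$ as a Green correspondent with respect to $(G\times H^{op},\Delta P,H\times H^{op})$ relates $kHc$, up to projective $kH-kH$-bimodule summands, to $L$ and to the bimodule realising the stable equivalence between $kGb$ and $kHc$; the hypothesis $\mathcal{F}_{(P,f_P)}(H,c)\subseteq\mathcal{F}_{(P,e_P)}(G,b)$ is what keeps these two decompositions compatible. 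Combining them expresses the bimodule underlying $R_A$ as the one underlying $R_L\circ R_B$ up to summands of strictly smaller vertex and up to projective summands, so that by transitivity and additivity $R_A$ equals $R_L\circ R_B$ plus transfer maps carried by those error summands.

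It then remains to show that these error transfer maps contribute nothing on the subspace identified in the first paragraph. A transfer map attached to a bimodule that is relatively $\Delta R$-projective for a proper subgroup $R<P$ factors through $\res^P_R$, and hence annihilates the classes coming from $\delta_P$ of block cohomology by the fusion-compatibility (stability) of those classes, again via the formalism of Section \ref{sec3}; and a transfer map attached to a projective $kHc-kHc$-bimodule vanishes on the stable subalgebra $\HH^*_L(kHc)$ in which the values lie. In parallel one checks well-definedness of the square: that $R_B\circ T\circ\delta_P$ has image in $\HH^*_{L^*}(kHc)$ and $R_A\circ\delta_P$ in $\HH^*_L(kHc)$, directly from the stability relations defining these subalgebras and the Green-correspondence isomorphisms, and that $R_L$ restricts to the asserted isomorphism $\HH^*_{L^*}(kHc)\to\HH^*_L(kHc)$.

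I expect this last step to be the real obstacle: keeping track of the bimodule summands of vertex strictly smaller than $\Delta P$, and of the projective summands, that appear when one passes between $A$, $B$ and $L$ by Green correspondence, and proving that the transfer maps they carry die on $\HH^*_{B^*}(kP)\cap\delta_P(\Hc^*(P,k))\cong\Hc^*(H,c,P_{\delta})$. All the algebraic ingredients — additivity and transitivity of Linckelmann's transfer, Sasaki's identification, and the comparison of $t_{k[PgP]}$ with $t_g$ from Section \ref{sec3} — are available; arranging them so that exactly the required stable part survives is the delicate point.
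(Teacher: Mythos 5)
Your first paragraph reproduces the paper's first (and only computational) step faithfully: using $Q=P$ so that $\tr^b_c=t_{Y_{G,b,P_{\gamma}}}$ on $\Hc^*(H,c,P_{\delta})$, and proving the intertwining relation $\delta_P\circ \tr^b_c=t_{ikGi}\circ\delta_P$ from the decomposition $ikGi\cong\bigoplus_{g\in Y_{G,b,P_{\gamma}}}k[PgP]$, additivity of the Hochschild transfer in the bimodule variable, Lemma \ref{lemkgh} and Proposition \ref{propcompconjdelta}. Up to that point you are doing exactly what the paper does, and your reduction of the theorem to the identity $R_L\circ R_B=R_A$ on $\delta_P(\tr^b_c(\Hc^*(H,c,P_{\delta})))$, together with the inclusion $\Ima(T)\subseteq\HH^*_{A^*\otimes_{kGb}L\otimes_{kHc}B}(kP)$, is the correct skeleton.

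The gap is in the second half, and you name it yourself: you never actually establish that $R_A$ agrees with $R_L\circ R_B$ on the relevant stable subalgebra, nor that the ``error'' transfers coming from bimodule summands of vertex strictly smaller than $\Delta P$ and from projective summands vanish on $\HH^*_{B^*}(kP)\cap\delta_P(\Hc^*(P,k))$. This is not a routine verification that can be waved through by ``additivity and transitivity''; it is precisely the content of results already proved by Sasaki, which the paper imports rather than reproves: the inclusion $\Ima(T)\subseteq\HH^*_{A^*\otimes_{kGb}L\otimes_{kHc}B}(kP)$ follows from \cite[Theorem 2]{Sas} together with the hypothesis $\mathcal{F}_{(P,f_P)}(H,c)\subseteq\mathcal{F}_{(P,e_P)}(G,b)$ (this is where the fusion hypothesis enters, not in ``keeping the two Green decompositions compatible'' as you suggest), and the compatibility of $R_A$, $R_B$, $R_L$ on these stable subalgebras is the commutative diagram at the center of \cite[Theorem 4, (6)]{Sas}, which the paper simply glues onto the square obtained in the first step. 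So either you must cite those two results of Sasaki at the corresponding points — in which case your argument collapses to the paper's proof — or you must carry out in full the Green-correspondence bookkeeping you sketch (restriction of $A$ to $H\times P^{\op}$, the defining property of $L$, and the annihilation of lower-vertex and projective error terms on stable elements), which is a nontrivial piece of work that your proposal explicitly leaves open. As written, the proof is incomplete at its decisive step.
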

The homomorphisms of graded $k$-algebras $R_A,R_B$ and $R_L$ are obtained by restricting the normalized transfer maps $T_A,T_B$ and $T_L$, respectively, to some subalgebras of stable elements, see \cite[Theorem 3.6]{LiTr}. For basic facts in block theory we refer to \cite{TH} and for properties and notations regarding transfer maps in Hochschild cohomology we refer to \cite{LiTr} and \cite{Sas}.

\section{Proof of Theorem \ref{thm12}}\label{sec2}
We denote by $C~\bigg|~D$ the fact that $C$ is isomorphic to a direct summand of $D$, where $C,D$ are some $kG$-modules.
\begin{proof}
Since $b$ is basic Morita equivalent to $c$, by \cite[Corollary 7.4]{PuBook} we can identify $P=Q$, hence $P_{\gamma}\leq G_{\{b\}}$ is a defect pointed group with $i\in\gamma$  and $P_{\delta}\leq H_{\{c\}}$ is a defect pointed group with $j\in\delta.$ By \cite[Theorem 6.9]{PuBook} we know that we can construct two embeddings of $P$-interior algebras
$$ikGi\rightarrow S\otimes jkHj$$
and $$jkHj\rightarrow S^{\op}\otimes ikGi.$$
In particular we obtain
\begin{equation}\label{eq1} ikGi~\bigg|~S\otimes jkHj
\end{equation}
as $k[P\times P^{\op}]$-module.
Let $x\in Y_{G,b,P_{\gamma}}$ then $k[PxP]$ is an indecomposable direct summand of $ikGi$ as $k[P\times P^{\op}]$-module, hence by (\ref{eq1}) we get that
$$k[PxP]~\bigg|~\bigoplus_{y\in Y_{H,c,P_{\delta}}}(S\otimes k[PyP])$$
as $k[P\times P^{\op}]$-module. It follows that there is $y\in Y_{H,c,P_{\delta}}$ such that $$k[PxP]~\bigg|~S\otimes k[PyP]$$ as $k[P\times P^{\op}]$-module. It is known that $k[PyP]$ is a trivial source module of vertex ${}^{(y,1)}\Delta(^{y^{-1}}P\cap P)$ which satisfies
$$k[PyP]~\bigg|~\Ind_{{}^{(y,1)}\Delta(^{y^{-1}}P\cap P)}^{P\times P^{\op}}k;$$ the similar statements are true for $k[PxP]$. It follows that $$k[PxP]~\bigg|~S\otimes \Ind_{{}^{(y,1)}\Delta(^{y^{-1}}P\cap P)}^{P\times P^{\op}}k$$ as $k[P\times P^{\op}]$ and since we have the $k[P\times P^{\op}]$-modules isomorphism  $$S\otimes \Ind_{{}^{(y,1)}\Delta(^{y^{-1}}P\cap P)}^{P\times P^{\op}}k\cong\Ind_{{}^{(y,1)}\Delta(^{y^{-1}}P\cap P)}^{P\times P^{\op}}(\Res_{{}^{(y,1)}\Delta(^{y^{-1}}P\cap P)}^{P\times P^{\op}}(S))$$
 we get that $k[PxP]$ is ${}^{(y,1)}\Delta(^{y^{-1}}P\cap P)$-projective. But its vertex is $${}^{(x,1)}\Delta(^{x^{-1}}P\cap P)$$ so we obtain that there is $(a,b)\in P\times P^{\op}$ such that
 $${}^{(x,1)}\Delta(^{x^{-1}}P\cap P)\leq{}^{(a,b)(y,1)}\Delta(^{y^{-1}}P\cap P).$$

It follows that for any $s\in {}^{x^{-1}}P\cap P$ there is a unique elements $t\in{}^{y^{-1}}P\cap P$ such that
$${}^xs={}^{ay}t,~~~s^{-1}=(b\cdot t\cdot b^{-1})^{-1}$$
where $b\cdot t\cdot b^{-1}=b^{-1}tb$ in $P^{\op}$, thus
$${}^xs={}^{ay}t,~~~s^{-1}={}^{b^{-1}}t^{-1}.$$
We obtain ${}^xs={}^{ayb}s$ for any $s\in{}^{x^{-1}}P\cap P$, hence there is $c\in C_G(P\cap{}^xP)$ such that
\begin{equation}\label{eq3}
ayb=cx.
\end{equation}
Let $t_x([\zeta])\in \Ima t_x$ for some $[\zeta]\in\Hc^*(P,k)$. Then we have
\begin{align*}
t_x([\zeta])&=\tr_{P\cap^x P}^P\res^{{}^xP}_{P\cap{}^xP}{}^x[\zeta]\\&=\tr_{P\cap^x P}^P{}^c(\res^{{}^xP}_{P\cap{}^xP}{}^x[\zeta])~~~~~~~~~~~~~~~~~(\because c\in C_G(P\cap{}^xP))\\&=\tr_{P\cap^x P}^P{}\res^{{}^{cx}P}_{P\cap{}^xP}{}^{cx}[\zeta])\\&=\tr_{P\cap^x P}^P{}\res^{{}^{ayb}P}_{P\cap{}^xP}{}^{ayb}[\zeta])~~~~~~~~~~~~~~~~~(\because (\ref{eq3}))\\&=\tr_{P\cap^x P}^P{}^a(\res^{{}^{y}P}_{{}^{a^-1}(P\cap{}^xP)}{}^y[\zeta])~~~~~~~~~~~(\because b\in P)\\&=\tr_{{}^{a^{-1}}(P\cap^x P)}^P{}(\res^{{}^{y}P}_{{}^{a^-1}(P\cap{}^xP)}{}^y[\zeta])~~~~~~~(\because a\in P)\\&=|P:{}^{a^{-1}}(P\cap{}^xP)|t_y([\zeta])~~~~~~~~~~~~~~~(\because y\in N_H(P_{\delta}))
{}\end{align*}
Since $P={}^{a^{-1}}(P\cap{}^xP)$  if and only if $x\in N_G(P_{\gamma})$ we obtain that
 $$t_x([\zeta])=t_y([\zeta])$$ for some $y$ which depends on $x$ if $x\in N_G(P_{\gamma})$ and $t_x([\zeta])$ is $0$ if $x\notin  N_G(P_{\gamma})$. It follows that for any $[\eta]\in\Ima t_{Y_{G,b,P_{\gamma}}}$ there is $[\zeta]\in\Hc^*(P,k)$ such that
 $$[\eta]=\sum_{x\in Y_{G,b,P_{\gamma}}}|P:{}^{a^{-1}}(P\cap{}^xP)|t_y([\zeta])=\sum_{x\in[N_G(P_{\gamma})/PC_G(P)]}t_x([\zeta])$$
 $$=\tr_{PC_G(P)}^{N_G(P_{\gamma})}[\zeta],$$
 which is what we need.
\end{proof}
\section{Hochschild cohomology of group algebras as Mackey functor}\label{sec3}
In this section we use transfer maps between Hochschild cohomology algebras of symmetric algebras, defined originally in \cite{LiTr}, to give a structure of Mackey functor for Hochschild cohomology of group algebras. We will work in this section under the general assumption that $k$ is a commutative ring. Let $H$ be a subgroup of $G$ and let $g\in G$. For shortness we denote by $_HkG_G$ the $kH-kG$-bimodule structure on $kG$ given by multiplication in $G$, with its $k$-dual $_GkG_H$. Also we consider $_{{}^gH}k[gH]_H$ to be the $(k[^gH]-kH)$-bimodule given by
$$gh_1g^{-1}(gh)h_2=gh_1hh_2,$$
for any $h_1,h_2,h\in H$.
The following notations are used
$$r_H^G=t_{_HkG_G},~~t_H^G=t_{_GkG_H},~~c_{g,H}=t_{_{{}^gH}k[gH]_H}, $$
hence we have the following graded $k$-linear maps
$$r_H^G:\HH^*(kG)\rightarrow\HH^*(kH);$$
$$t_H^G:\HH^*(kH)\rightarrow \HH^*(kG);$$
$$c_{g,H}:\HH^*(kH)\rightarrow\HH^*(k[^gH]);$$
which can be viewed as: restriction, transfer and conjugation maps.
The next well-known lemma is straightforward.
\begin{lemma}\label{lemkgh} Let $K,H\leq G$ and $g\in G$. Then
$$kK\otimes_{k[K\cap{}^gH]}k[gH]\cong k[KgH]$$
as $(kK-kH)$-bimodules.
\end{lemma}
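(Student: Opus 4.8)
The plan is to prove the statement $kK\otimes_{k[K\cap{}^gH]}k[gH]\cong k[KgH]$ as $(kK,kH)$-bimodules by constructing an explicit map and checking it is a well-defined bimodule isomorphism, exploiting the standard double-coset combinatorics. First I would recall that as a right $kH$-module, $k[gH]$ has $k$-basis $\{gh\mid h\in H\}$ and is isomorphic to $kH$ via $gh\mapsto h$ (with the $k[{}^gH]$-module structure on the left given by the twisting described before Lemma~\ref{lemkgh}). The double coset $KgH$ decomposes as a disjoint union of left $K$-cosets, and the orbit-counting for the left multiplication action of $K$ on $KgH/H$ shows that the stabilizer of the coset $gH$ is precisely $K\cap{}^gH$; consequently $k[KgH]\cong kK\otimes_{k[K\cap{}^gH]}k[gH]$ as left $kK$-modules just by counting bases and matching them up.

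Concretely, I would define a $k$-linear map
\begin{displaymath}
\Phi:kK\otimes_{k[K\cap{}^gH]}k[gH]\rightarrow k[KgH],\qquad x\otimes ygh\mapsto xygh,
\end{displaymath}
for $x\in K$, $ygh\in gH$ (so $y\in{}^gH$, $h\in H$; here $ygh$ just denotes an element of $gH$). The first thing to check is that $\Phi$ is well-defined over the tensor product: for $z\in K\cap{}^gH$ one needs $xz\otimes (z^{-1}\cdot gh)$ and $x\otimes gh$ to have the same image, which holds because the left $k[K\cap{}^gH]$-action on $k[gH]$ in the twisted bimodule is exactly left multiplication inside $G$ once one unravels the definition $gh_1g^{-1}\cdot gh = gh_1h$, i.e. ${}^gw\cdot gh = gwh = (gwg^{-1})(gh)$ as elements of $G$. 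Then $\Phi$ is visibly a morphism of $(kK,kH)$-bimodules since it is induced by multiplication in $G$, which is $K$-equivariant on the left and $H$-equivariant on the right.

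The remaining step is to show $\Phi$ is bijective, and this is where one must be a little careful with the combinatorics rather than purely formal. I would fix a set $\{x_1,\dots,x_r\}$ of representatives for the cosets $K/(K\cap{}^gH)$; then $\{x_i\otimes gh\mid 1\le i\le r,\ h\in H\}$ is a $k$-basis of the left-hand side, and $\Phi$ sends it to $\{x_igh\}$. The claim is that this is exactly a basis of $k[KgH]$, equivalently that the map $(i,h)\mapsto x_igh$ is a bijection onto $KgH$. Surjectivity is clear; for injectivity, $x_igh = x_jg h'$ forces $x_j^{-1}x_i = g h'h^{-1}g^{-1}\in K\cap{}^gH$, hence $i=j$ and then $h=h'$. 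So $\Phi$ carries a basis bijectively to a basis and is an isomorphism. The main obstacle, such as it is, is purely bookkeeping: keeping the twisted left $k[{}^gH]$-module structure on $k[gH]$ straight so that the tensor-product relation matches multiplication in $G$; once that identification $ {}^gw\cdot gh = (gwg^{-1})(gh)$ is made explicit everything else is routine, so I would state it carefully at the outset and let the rest follow.
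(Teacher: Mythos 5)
Your proof is correct: the map induced by multiplication in $G$ is well defined over the twisted $k[K\cap{}^gH]$-action (which, as you note, is just left multiplication inside $G$), and it carries the $k$-basis $\{x_i\otimes gh\}$ bijectively onto the basis $KgH$, which works over any commutative ring $k$. The paper gives no proof at all — it dismisses the lemma as well known and straightforward — and your argument is exactly the standard verification it has in mind.
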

In the next proposition we verify that the next quadruple is a Mackey functor, see \cite[\S 53]{TH}:$$(\HH^*(kH),r^G_H,t^G_H,c_{g,H})_{H\leq G,g\in G}.$$
\begin{proposition} Let $K\leq H\leq G$ and $g,h\in G$. The following statements are true.
\begin{itemize}
\item[i)] $r^H_K\circ r^G_H=r^G_K,~t^G_H\circ t^H_K=t^G_K;$
\item[ii)] $r^H_H=t^H_H=\Id_{\HH^*(kH)};$
\item[iii)] $c_{gh,H}=c_{g,^hH}\circ c_{h,H};$
\item[iv)] $c_{h,H}=\Id_{\HH^*(kH)}$ if $h\in H$;
\item[v)] $c_{g,K}\circ r^H_K=r^{{}^gH}_{{}^gK}\circ c_{g,H}$ and $c_{g,H}\circ t^H_K=t^{{}^gH}_{{}^gK}\circ c_{g,K};$
\item[vi)] $r^G_K\circ t^G_H=\sum_{g\in[K\backslash G/ H]}t_{K\cap {^g}H}^K\circ r^{{}^gH}_{K\cap {}^gH}\circ c_{g,H,~~}$ where $[K\backslash G/H]$ is a system of representatives of double cosets $KgH$ with $g\in G$.
\end{itemize}
\end{proposition}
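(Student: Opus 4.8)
The plan is to verify the six Mackey-functor axioms using the bimodule descriptions of the three structure maps together with the functoriality of Linckelmann's transfer construction $M \mapsto t_M$. The guiding principle is that $t_{M \otimes N} = t_M \circ t_N$ for composable projective bimodules, and $t_M = t_{M'}$ when $M \cong M'$; both are recorded in \cite{LiTr}. So every identity reduces to an isomorphism of bimodules, most of which are instances of associativity of induction or of Lemma \ref{lemkgh}.

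First I would dispatch (i), (ii), (iii), (iv). For (i): $r^H_K \circ r^G_H = t_{{}_K kH_H} \circ t_{{}_H kG_G} = t_{{}_K kH_H \otimes_{kH} {}_H kG_G}$, and $kH \otimes_{kH} kG \cong kG$ as $kK$-$kG$-bimodules, giving $r^G_K$; the transfer half is dual. For (ii), ${}_H kH_H \cong kH$ is the identity bimodule, whose transfer is the identity by \cite[Theorem 3.6]{LiTr}. For (iii), one checks $k[gh\,H] \cong k[gH'] \otimes_{k[{}^h H]} k[hH]$ where $H' = {}^h H$, as $(k[{}^{gh}H]$-$kH)$-bimodules, by sending $ghh_1h^{-1}g^{-1}\,(gh\,h_2)$ to the obvious tensor; then apply $t_{(-)}$ multiplicativity. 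For (iv), if $h \in H$ then ${}_{{}^hH}k[hH]_H = {}_H kH_H$ as bimodules via $x \mapsto hx$ (note ${}^h H = H$), so $c_{h,H} = \Id$.

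Next (v): the two identities are the compatibility of conjugation with restriction and with transfer. For the first, $c_{g,K} \circ r^H_K = t_{{}_{{}^gK}k[gK]_K \otimes_{kK} {}_K kH_H}$ and $r^{{}^gH}_{{}^gK} \circ c_{g,H} = t_{{}_{{}^gK}k[{}^gK\cdot gH]_{{}^gH} \otimes_{k[{}^gH]} {}_{{}^gH}k[gH]_H}$ — wait, more simply both sides equal $t_M$ where $M = {}_{{}^gK}k[gH]_H$ with the evident bimodule structure, since $k[gK]\otimes_{kK} kH \cong k[gH]$ (all cosets $gk$ with $k\in K$, then multiply into $H$) and likewise $k[{}^gK]\cdot g^{-1}\otimes \cdots$; the key bimodule isomorphism is $k[gK] \otimes_{kK} kH \cong k[gH] \cong k[{}^gH] \otimes_{k[{}^gH]} k[gH]$. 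The transfer identity is the $k$-dual statement. These are routine once the bimodule maps are written down.

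The main obstacle is (vi), the Mackey double-coset formula $r^G_K \circ t^G_H = \sum_{g \in [K\backslash G/H]} t^K_{K \cap {}^gH} \circ r^{{}^gH}_{K\cap{}^gH}\circ c_{g,H}$. Here one writes $r^G_K \circ t^G_H = t_{{}_K kG_H}$, and must decompose ${}_K kG_H$ as a direct sum of bimodules indexed by double cosets: $kG \cong \bigoplus_{g\in[K\backslash G/H]} k[KgH]$ as $kK$-$kH$-bimodules. Then one needs $t_M$ to be additive in $M$ — this is where one must be careful that the transfer of \cite{LiTr} respects direct sums of the relevant bimodules, which it does because $T_M$ is defined via an evaluation/coevaluation pair that is additive. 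Finally, for each $g$, Lemma \ref{lemkgh} gives $k[KgH] \cong kK \otimes_{k[K\cap{}^gH]} k[gH]$, and factoring $k[gH] \cong k[gH] \otimes_{k[K\cap{}^gH]} \cdots$ — more precisely $k[KgH] \cong kK \otimes_{k[K\cap{}^gH]}\bigl(k[gH]\bigr)$ and the inner bimodule ${}_{K\cap{}^gH}k[gH]_H$ itself factors as ${}_{K\cap{}^gH}k[K\cap{}^gH]_{K\cap{}^gH} \otimes \cdots$ composed with the conjugation bimodule ${}_{{}^gH}k[gH]_H$ restricted to $K\cap{}^gH$ on the left — so $t_{k[KgH]} = t^K_{K\cap{}^gH}\circ r^{{}^gH}_{K\cap{}^gH}\circ c_{g,H}$. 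Summing over $g$ finishes it. The genuinely delicate point is matching the left $K\cap{}^gH$-action after conjugation, i.e.\ that $c_{g,H}$ followed by restriction to $K\cap{}^gH$ corresponds to the bimodule ${}_{K\cap{}^gH}k[gH]_H$; I would spell this out by tracking the identity-coset element $g$ through each bimodule tensor factor.
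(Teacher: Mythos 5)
Your proposal follows essentially the same route as the paper's proof: each of $r^G_H$, $t^G_H$, $c_{g,H}$ is written as the transfer $t_M$ of an explicit bimodule, and the six identities are reduced to $t_{M\otimes N}=t_M\circ t_N$, invariance of $t_M$ under bimodule isomorphism, and additivity over direct summands, with Lemma \ref{lemkgh} together with the double coset decomposition of ${}_KkG_H$ giving (vi) exactly as in the text. The one slip is the explicit map in (iv): $x\mapsto hx$ is not left-equivariant for the action defined in the paper (it intertwines left multiplication by $h_1$ with the action of $hh_1h^{-1}$, which is the conjugation bimodule itself, not the regular one), but the statement is immediate anyway because $(hh_1h^{-1})\cdot(hx)=hh_1x=(hh_1h^{-1})(hx)$ shows the given action on $k[hH]$ is literally plain multiplication, so ${}_{{}^hH}k[hH]_H$ is the regular bimodule and $c_{h,H}=\Id_{\HH^*(kH)}$.
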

\begin{proof}
Statements ii) and iv) are an easy exercise if we use \cite[Proposition 2.7 (4)]{LiZh}. The second part of statements (i) and (v) are analogous to the first part and are left as an exercise. The rest of the proof is a consequence of \cite[Proposition 2.7 (1)]{LiZh} and some bimodule isomorphisms.
For statement(i) we have
$$r^H_K\circ r^G_H=t_{_KkH_H}\circ t_{_HkG_G}=t_{_KkH\otimes_{kH}kG_G}=r^G_K.$$
For statement iii) we have
$$c_{gh,H}=t_{_{^{gh}H}k[ghH]_H};$$
$$c_{g,^hH}\circ c_{h,H}=t_{_{^{gh}H}k[g(^hH)]_{^hH}}\circ t_{_{^hH}k[hH]_H}=t_{_{^{gh}H}k[g(^hH)]\otimes_{k[^hH]}k[hH]_H};$$
Since $$k[g(^hH)]\otimes_{k[^hH]}k[hH]\cong k[ghH], ~~ghxh^{-1}\otimes hy\mapsto ghxy,$$
for any $x,y\in H$, is an isomorphism of $(k[^{gh}H]-kH)$-bimodules, we obtain statement iii). Statement v) is similar to statement iii) and is left for the reader.
For statement vi) we have
$$r^G_K\circ t^G_H=t_{_KkG_G}\circ t_{_GkG_H}=t_{_KkG_H};$$

\begin{align*}
\sum_{g\in[K\backslash G/H]}t_{K\cap^gH}^K\circ r_{K\cap^gH}^{^gH}\circ c_{g,H}&=\sum_{g\in[K\backslash G/H]}t_{_KkK\otimes_{k[K\cap^gH]}k[^gH]\otimes_{k[^gH]}k[gH]_H}\\&=\sum_{g\in[K\backslash G/H]}t_{_Kk[KgH]_H}\\&=t_{_KkG_H},
\end{align*}
where the second equality holds by Lemma \ref{lemkgh} and the last equality is given  by \cite[Proposition 2.11]{LiTr}.
\end{proof}
\begin{remark} The above proposition is generalized in \cite{CoTo} from group algebras to some fully group-graded algebras which are symmetric $k$-algebras and satisfies a condition which states that all fully group-graded subalgebras with respect to subgroups must be parabolic subalgebras.
\end{remark}
In \cite[Proposition 2.5]{LiZh} the authors give explicit definitions for transfer maps between Hochschild cohomology algebras of symmetric algebras in terms of chain maps using the bar resolution. It is worth spending some time in giving these technical definitions for the above maps. For each of the following map we find a suitable set of generators and some maps (denoted $\varphi_i$ in \cite{LiZh}) as in \cite[Proposition 2.5]{LiZh}, but we omit these details which are left for the reader. Let $n\geq 0$ be an integer and denote  by $\BaR_*(kG)$ the bar resolution of $kG$ as $(kG)^e$-module. Then $(C^*(kG),d)$ is the Hochschild cochain complex  where $$C^n(kG)=\Hom_{(kG)^e}(\BaR_n(kG),kG)\cong \Hom_k((kG)^{\otimes n},kG),$$ and $d$ are differentials induced by the bar resolution. It is well known that $$\HH^n(kG)=\Hc^n(C^*(kG),d).$$ Remember that $kG,kH,k[^gH]$ are symmetric $k$-algebras and we denote by $s_G,s_H,s_{^gH}$ their symmetrizing forms.
\begin{nim}\label{resH}Restriction map:\end{nim} $$r_H^G:\HH^*(kG)\rightarrow\HH^*(kH),~~[f]\mapsto r_H^G([f])=[r_H^G(f)],$$
where $$r_H^G(f)(h_1\otimes\ldots \otimes h_n)=\sum_{h\in H}s_G(h^{-1}f(h_1\otimes\ldots \otimes h_n))h,$$ for  $h_1,\ldots, h_n\in H$ and $f\in C^n(kG)$.
\begin{nim}\label{conjH}Conjugation map:\end{nim} $$c_{g,H}:\HH^*(kH)\rightarrow\HH^*(k[^gH]),~~[f]\mapsto c_{g,H}([f])=[c_{g,H}(f)],$$
where $$c_{g,H}(f)(^gh_1\otimes\ldots\otimes^gh_n)=\sum_{h\in H}s_H(h^{-1}f(h_1\otimes\ldots\otimes h_n))ghg^{-1}$$$$=gf(h_1\otimes\ldots \otimes h_n)g^{-1},$$
for $h_1,\ldots,h_n\in H$ and $f\in C^n(kH)$.

\begin{nim}\label{trH}Transfer map:\end{nim}
Let $\{x_i\}_{1\leq i\leq l}$ be a system of representatives of left cosets of $H$ in $G$. Let $\varphi_i:kG\rightarrow kH$ be the right $kH$-module homomorphisms defined by $\varphi_i(g)=h$ if $g=x_ih\in x_iH$, and $\varphi_i(g)=0$ if $g\notin x_iH$, where $x_i$ such that $1\leq i\leq l$.
$$t_H^G:\HH^*(kH)\rightarrow\HH^*(kG),~~[f]\mapsto t_H^G([f])=[t_H^G(f)],$$
where $$t_H^G(f)(g_1\otimes\ldots \otimes g_n)=\sum_{1\leq i_0,i_1,\ldots,i_n\leq l}x_{i_1}f(\varphi_{i_1}(g_1x_{i_0})\otimes\ldots \otimes \varphi_{i_n}(g_nx_{i_0}))x_{i_0}^{-1},$$ for  $g_1,\ldots, g_n\in G$ and $f\in C^n(kH)$.

In \cite{LiTr} Linckelmann gives the diagonal induction $\delta_G:\Hc^*(G,k)\rightarrow \HH^*(kG)$ as an injective graded $k$-algebra
 homomorphisms in language of chain homotopy maps. We prefer to use the bar resolution for group cohomology. Recall that $\Hc^n(G,k)=\Hc^n(C^*(G,k),\partial^*)$ where $$C^n(G,k)=\{f\mid f:G^n\rightarrow k\textrm{ is a set map}\},$$
 and $\partial^n:C^n(G,k)\rightarrow C^{n+1}(G,k)$ is defined for any $g_1,\ldots, g_{n+1}\in G$ by
 $$\partial^n(f)(g_1,\ldots, g_{n+1})=$$$$g_1f(g_2,\ldots,g_{n+1})+\sum_{i=1}^n(-1)^if(g_1,\ldots,g_ig_{i+1},\dots,g_{n+1})+(-1)^{n+1}f(g_1,\ldots,g_n).$$
\begin{nim}\label{diagH}Diagonal induction:\end{nim}$$\delta_G:\Hc^*(G,k)\rightarrow\HH^*(kG),~~[f]\mapsto\delta_G([f])=[\delta_G(f)],$$
where $$\delta_G(f)\in C^n(kG),~\delta_G(f)(g_1\otimes\ldots \otimes g_n)=f(g_1,\ldots,g_n)g_1\ldots g_n,$$
 for $f\in C^n(G,k)$ and $g_1, \ldots, g_n\in G$. Notice that $\delta_G$ can be obtained in these terms as $\delta_G=\alpha^*\circ \theta_{1_G}^*$, where $\theta_{1_G}^*:\Hc^*(G,k)\rightarrow\Hc^*(G,kG)$ is defined in  \cite[p.138]{SiWh} and $\alpha^*:\Hc^*(G,kG)\rightarrow\HH^*(kG)$ is the isomorphism obtained from $$\alpha^*:C^*(G,kG)\rightarrow C^*(kG,kG)$$ (by abuse of notation) given by
 $$f\mapsto \alpha^n(f),~~~\alpha^n(f)(g_1\otimes\ldots\otimes g_n)=f(g_1,\ldots,g_n)g_1\ldots g_n,$$
 for $f\in C^n(G,kG)$ and $\alpha^n(f)\in C^n(kG,kG)$.

By \cite[Proposition 25.1]{We} the conjugation map in cohomology of groups can be defined with bar resolution explicitly.
\begin{nim}\label{conjg}Conjugation map for group cohomology:\end{nim}
$$^g\cdot:\Hc^*(H,k)\rightarrow\Hc^*(^gH,k),~~[f]\mapsto^g[f]=[^gf],$$
where $$^gf\in C^n(^gH,k),~^gf(^gh_1,\ldots,^gh_n)=gf(h_1,\ldots,h_n)=f(h_1,\ldots,h_n),$$
for $f\in C^n(H,k)$ and $h_1,\ldots,h_n\in H$.

\begin{proposition}\label{propcompconjdelta} With the above notations the following diagram is commutative
\begin{displaymath}
 \xymatrix{\Hc^*(H,k)\ar[rr]^{ ^g\cdot}\ar[d]^{\delta_H} && \Hc^*(^gH,k) \ar[d]^{\delta_{{}^gH}} \\
                                                     \HH^*(kH)\ar[rr]^{c_{g,H}}
                                                     &&\HH^*(k[^gH])
                                            }.
\end{displaymath}
\end{proposition}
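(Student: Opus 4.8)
The plan is to verify the commutativity directly at the level of Hochschild cochains, using the explicit bar-resolution formulas for $\delta_H$, $\delta_{{}^gH}$, $c_{g,H}$ and ${}^g(\cdot)$ recorded in \ref{diagH}, \ref{conjH} and \ref{conjg}. All four maps are cochain maps between the relevant Hochschild (resp. group) cochain complexes and induce the named maps on cohomology, so it suffices to show that the two composite cochain maps $\delta_{{}^gH}\circ{}^g(\cdot)$ and $c_{g,H}\circ\delta_H$ from $C^*(H,k)$ to $C^*(k[{}^gH])$ coincide; applying $\Hc^*$ then yields the proposition.

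Fix an integer $n\geq 0$ and $f\in C^n(H,k)$. Since ${}^gH=\{ghg^{-1}\mid h\in H\}$, the bar resolution $\BaR_n(k[{}^gH])$ is free as a $(k[{}^gH])^e$-module on the elements $1\otimes{}^gh_1\otimes\cdots\otimes{}^gh_n\otimes 1$ with $h_1,\ldots,h_n\in H$, so an element of $C^n(k[{}^gH])$ is determined by its values on the tuples ${}^gh_1\otimes\cdots\otimes{}^gh_n$, $h_i\in H$. It is therefore enough to compare the two cochains on such tuples.

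Going down and then right: by \ref{diagH} one has $\delta_H(f)(h_1\otimes\cdots\otimes h_n)=f(h_1,\ldots,h_n)\,h_1\cdots h_n$, and then by (the second expression in) \ref{conjH},
$$c_{g,H}(\delta_H(f))\big({}^gh_1\otimes\cdots\otimes{}^gh_n\big)=g\,\delta_H(f)(h_1\otimes\cdots\otimes h_n)\,g^{-1}=f(h_1,\ldots,h_n)\,\big(g h_1\cdots h_n g^{-1}\big).$$
Going right and then down: by \ref{conjg} one has ${}^gf({}^gh_1,\ldots,{}^gh_n)=f(h_1,\ldots,h_n)$, hence by \ref{diagH},
$$\delta_{{}^gH}({}^gf)\big({}^gh_1\otimes\cdots\otimes{}^gh_n\big)={}^gf({}^gh_1,\ldots,{}^gh_n)\,({}^gh_1)\cdots({}^gh_n)=f(h_1,\ldots,h_n)\,(gh_1g^{-1})\cdots(gh_ng^{-1}).$$
Cancelling the interior factors $g^{-1}g=1$ gives $(gh_1g^{-1})(gh_2g^{-1})\cdots(gh_ng^{-1})=g\,h_1 h_2\cdots h_n\,g^{-1}$, so the two right-hand sides agree. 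Thus $\delta_{{}^gH}\circ{}^g(\cdot)$ and $c_{g,H}\circ\delta_H$ coincide on generators, hence as cochain maps, and the diagram commutes.

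I expect no serious obstacle here: the argument is a short unwinding of definitions. The only points needing a little care are (i) the reduction to generators of the form ${}^gh_1\otimes\cdots\otimes{}^gh_n$, which is immediate from ${}^gH=\{ghg^{-1}\}$ together with $k$-linearity, and (ii) using consistently the simpler closed form $g\,f(\cdots)\,g^{-1}$ for $c_{g,H}$ from \ref{conjH} (its agreement with the sum $\sum_{h\in H}s_H(h^{-1}f(\cdots))ghg^{-1}$ is already part of the statement of \ref{conjH}, so no extra verification is required).
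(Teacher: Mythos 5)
Your proposal is correct and follows essentially the same route as the paper: both arguments simply evaluate the two composites $c_{g,H}\circ\delta_H$ and $\delta_{{}^gH}\circ{}^g(\cdot)$ on cochains at tuples ${}^gh_1\otimes\cdots\otimes{}^gh_n$ using the explicit formulas in \ref{conjH}, \ref{conjg} and \ref{diagH}, and observe that both give $f(h_1,\ldots,h_n)\,{}^g(h_1\cdots h_n)$. Your extra remarks about reducing to such generators and about the two expressions for $c_{g,H}$ are fine but only make explicit what the paper leaves implicit.
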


\begin{proof} Let $ f\in C^n(H,k), ^gh_1,\ldots,^gh_n\in{}^gH$. The following hold
\begin{align*}
c_{g,H}(\delta_H(f))(^gh_1\otimes\ldots\otimes {}^gh_n)&=g\delta_H(f)(h_1\otimes\ldots\otimes h_n)g^{-1}~~~~~~~~~~~~~(\because \ref{conjH})\\
&=gf(h_1,\ldots,h_n)h_1\ldots h_ng^{-1}~~~~~~~~~~~~(\because \ref{diagH})\\
&=f(h_1,\ldots,h_n)~{}^g(h_1\ldots h_n);
\end{align*}
\begin{align*}
\delta_{^gH}(^gf)(^gh_1\otimes\ldots\otimes {}^gh_n)&=(^gf)(^gh_1,\ldots,^gh_n)~{}^gh_1\ldots^gh_n ~~~~~~~~~~~~~~~~(\because\ref{diagH})\\ &=(gf(h_1,\ldots,h_n))~{}^g(h_1\ldots h_n)~~~~~~~~~~~~~~~~~~(\because\ref{conjg})
\\ &=f(h_1,\ldots,h_n)~{}^g(h_1\ldots h_n).
\end{align*}
\end{proof}
\begin{remark}
The above proposition is an analogue of \cite[Propositions 4.6, 4.7]{LiTr} for conjugation map in group cohomology. Alternative proofs for \cite[Propositions 4.6, 4.7]{LiTr} can be given using the explicit descriptions from \ref{resH}, \ref{trH} and \ref{diagH}. Although the elegant way to define the above transfer maps are given in \cite{LiTr} it is our strong belief that the explicit approach given in this section could prove to be useful in further applications.
\end{remark}

\section{Transfer maps between cohomology algebras of block algebras of finite groups}\label{sec4}
First we give the reciprocity laws in the following proposition.

\begin{proposition}\label{propreciprtransf} Let $b,c$ be blocks as above such that the Conjecture is true for $b$. Let $[\zeta]\in \Hc^*(G,b,P_{\gamma}), [\tau]\in\Hc^*(H,c,Q_{\delta})$. The following statements hold.
\begin{itemize}
\item[(i)] $\tr^b_c(\res^b_c([\zeta])[\tau])=[\zeta]\tr^b_c([\tau])$;
\item[(ii)] $\tr^b_c([\tau] \res^b_c([\zeta]))=\tr^b_c([\tau])[\zeta]$.
\end{itemize}
\end{proposition}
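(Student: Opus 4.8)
\textbf{Proof proposal for Proposition \ref{propreciprtransf}.}

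The plan is to reduce everything to the corresponding Frobenius reciprocity at the level of ordinary group cohomology $\Hc^*(P,k)$, combined with the standard compatibilities between $\res^P_Q$, $\tr^P_Q$ and the conjugation/restriction maps $\res^{{}^gP}_{P\cap{}^gP}{}^g$ appearing in the definition of $t_{Y_{G,b,P_{\gamma}}}$. First I would recall that, under the inclusion $\mathcal{F}_{(Q,f_Q)}(H,c)\subseteq\mathcal{F}_{(P,e_P)}(G,b)$ which we are assuming throughout Section \ref{sec4}, the map $\res^b_c$ is literally $\res^P_Q$ restricted to $\Hc^*(G,b,P_{\gamma})\subseteq\Hc^*(P,k)$, and $\tr^b_c([\tau])=t_{Y_{G,b,P_{\gamma}}}(\tr^P_Q([\tau]))$ by Definition \ref{defntrbc}. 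So statement (i) unwinds to the identity
$$t_{Y_{G,b,P_{\gamma}}}\!\left(\tr^P_Q\big(\res^P_Q([\zeta])\,[\tau]\big)\right)=[\zeta]\cdot t_{Y_{G,b,P_{\gamma}}}\!\left(\tr^P_Q([\tau])\right)$$
for $[\zeta]\in\Hc^*(G,b,P_{\gamma})$ and $[\tau]\in\Hc^*(Q,k)$; statement (ii) is the mirror identity with the factor $[\zeta]$ on the right. (I would only prove (i) in detail and note that (ii) follows by the same argument with left/right interchanged, as is conventional.)

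The key steps, in order, are as follows. Step 1: apply the usual Frobenius reciprocity for the transfer $\tr^P_Q$ in group cohomology, $\tr^P_Q(\res^P_Q([\zeta])\,[\tau])=[\zeta]\,\tr^P_Q([\tau])$, to rewrite the left-hand side as $t_{Y_{G,b,P_{\gamma}}}([\zeta]\cdot\tr^P_Q([\tau]))$. Step 2: write $t_{Y_{G,b,P_{\gamma}}}=\sum_{g\in Y_{G,b,P_{\gamma}}} t_g$ and fix a single term $t_g([\zeta]\cdot\tr^P_Q([\tau]))=\tr^P_{P\cap{}^gP}\res^{{}^gP}_{P\cap{}^gP}{}^g\big([\zeta]\cdot\tr^P_Q([\tau])\big)$. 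Since conjugation $^g$ and restriction $\res^{{}^gP}_{P\cap{}^gP}$ are ring homomorphisms, this equals $\tr^P_{P\cap{}^gP}\big(\,\res^{{}^gP}_{P\cap{}^gP}{}^g[\zeta]\cdot\res^{{}^gP}_{P\cap{}^gP}{}^g\tr^P_Q([\tau])\,\big)$. Step 3: now use Frobenius reciprocity for $\tr^P_{P\cap{}^gP}$ in the reverse direction, pulling the factor $\res^{{}^gP}_{P\cap{}^gP}{}^g[\zeta]$ outside the transfer; this is legitimate precisely because that factor lies in the image of $\res^P_{P\cap{}^gP}$. The crucial point here is that for $[\zeta]\in\Hc^*(G,b,P_{\gamma})=\Hc^*(\mathcal{F}_{(P,e_P)}(G,b))$, the class $[\zeta]$ is a stable element of the fusion system, so $\res^{{}^gP}_{P\cap{}^gP}{}^g[\zeta]=\res^P_{P\cap{}^gP}[\zeta]$ for every $g\in Y_{G,b,P_{\gamma}}$ (the relevant conjugation maps are morphisms in the fusion system, as $Y_{G,b,P_{\gamma}}$ is built from elements realizing such morphisms up to $C_G(P)$). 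Hence Step 3 yields $t_g([\zeta]\cdot\tr^P_Q([\tau]))=[\zeta]\cdot\tr^P_{P\cap{}^gP}\res^{{}^gP}_{P\cap{}^gP}{}^g\tr^P_Q([\tau])=[\zeta]\cdot t_g(\tr^P_Q([\tau]))$. Step 4: sum over $g\in Y_{G,b,P_{\gamma}}$ and factor out $[\zeta]$ to obtain $[\zeta]\cdot t_{Y_{G,b,P_{\gamma}}}(\tr^P_Q([\tau]))=[\zeta]\,\tr^b_c([\tau])$, which is the right-hand side of (i).

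The main obstacle I anticipate is Step 3, specifically justifying that $\res^{{}^gP}_{P\cap{}^gP}{}^g[\zeta]$ genuinely equals $\res^P_{P\cap{}^gP}[\zeta]$ so that one may invoke the subgroup-level Frobenius reciprocity $\tr^P_{P\cap{}^gP}(\res^P_{P\cap{}^gP}[\zeta]\cdot[\mu])=[\zeta]\cdot\tr^P_{P\cap{}^gP}[\mu]$. This requires being careful about which $g$ actually contribute: by the analysis already carried out in the proof of Theorem \ref{thm12}, only the $g\in[N_G(P_{\gamma})/PC_G(P)]$ give nonzero $t_g$ when evaluated on classes coming through the conjecture, and for such $g$ the map ${}^g$ is induced by an automorphism in $\mathcal{F}_{(P,e_P)}(G,b)$ up to an inner automorphism and an element of $C_G(P)$, both of which act trivially on stable classes. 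A clean way to package this is to note that the stability condition defining $\Hc^*(\mathcal{F}_{(P,e_P)}(G,b))$ says exactly $\res^{{}^gP}_{\varphi(R)}{}^\varphi[\zeta]=\res^P_R[\zeta]$ for all morphisms $\varphi\colon R\to P$ in the fusion system, and the inclusion $P\cap{}^gP\hookrightarrow{}^gP$ composed with $^g$ is such a morphism; once this is observed the rest is the two-fold application of ordinary Frobenius reciprocity together with linearity of $t_{Y_{G,b,P_{\gamma}}}$, all of which is routine.
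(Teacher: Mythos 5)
Your proposal is correct and follows essentially the same route as the paper's proof: unwind the definitions, apply Frobenius reciprocity for $\tr^P_Q$, distribute $\res^{{}^gP}_{P\cap{}^gP}{}^g$ multiplicatively, use the stability of $[\zeta]\in\Hc^*(G,b,P_{\gamma})$ to replace $\res^{{}^gP}_{P\cap{}^gP}{}^g[\zeta]$ by $\res^P_{P\cap{}^gP}[\zeta]$ (the paper's fourth equality), then apply Frobenius reciprocity for $\tr^P_{P\cap{}^gP}$ and sum over $g\in Y_{G,b,P_{\gamma}}$. The only difference is cosmetic: your detour about which $g$ contribute nontrivially is unnecessary, since stability holds for every $g\in Y_{G,b,P_{\gamma}}$, exactly as you note in your final "clean packaging".
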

\begin{proof} Since (ii) is analogue to (i) we only prove (i).
\begin{align*}
\tr^b_c(\res^b_c([\zeta])[\tau])&=\sum_{g\in Y_{G,b}}\tr^P_{P\cap ^gP}(\res^{^gP}_{P\cap ^gP}{}^g(\tr^P_Q(\res^P_Q([\zeta])[\tau])))\\&=\sum_{g\in Y_{G,b}}\tr^P_{P\cap ^gP}(\res^{^gP}_{P\cap ^gP}{}^g([\zeta]\tr^P_Q([\tau]))\\&=\sum_{g\in Y_{G,b}}\tr^P_{P\cap ^gP}(\res^{^gP}_{P\cap ^gP}(^g[\zeta])\res^{^gP}_{P\cap ^gP}(^g\tr^P_Q([\tau])))\\&=\sum_{g\in Y_{G,b}}\tr^P_{P\cap ^gP}(\res^{P}_{P\cap ^gP}([\zeta])\res^{^gP}_{P\cap ^gP}(^g\tr^P_Q([\tau])))\\&=\sum_{g\in Y_{G,b}}[\zeta]\tr^P_{P\cap ^gP}(\res^{^gP}_{P\cap ^gP}(^g\tr^P_Q([\tau])))\\&=[\zeta]\tr^b_c([\tau]),
\end{align*}
where the fourth equality is true since $[\zeta]\in\Hc^*(G,b,P_{\gamma})$.
\end{proof}
From Proposition \ref{propreciprtransf} we obtain the transitivity laws when a supplementary condition is satisfied.
\begin{proposition}\label{proptranslaws}We keep the above assumptions.
\begin{itemize}
\item[(a)] If $Q<P$ then $\tr^b_c\circ \res^b_c=0$;
\item[(b)] If $Q=P$ then $\res^b_c$ is the inclusion map and $\tr^b_c(\res^b_c([\zeta]))=\frac{\dim_k(ikGi)}{  \mid P\mid}[\zeta]$ for any $[\zeta]\in\Hc^*(G,b,P_{\gamma})$. Moreover in this case if the Conjecture is also true for $c$ and $\Ker~t_{Y_{H,c,P_{\delta}}}\subseteq \Ker~t_{Y_{G,b,P_{\gamma}}}$ the following diagram is commutative
$$\xymatrix{
\Hc^*(H,c,P_{\delta}) \ar@{->>}[rr]^{\tr_c^b}&&\Hc^*(G,b,P_{\gamma}) \\
& \Hc^*(P,k) \ar@{->>}[ul]^{t_{Y_{H,c,P_{\delta}}}} \ar@{->>}[ur]_{t_{Y_{G,b,P_{\gamma}}}}}.$$
\end{itemize}
\end{proposition}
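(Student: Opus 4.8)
The plan is to prove the three assertions of Proposition \ref{proptranslaws} in order, using Proposition \ref{propreciprtransf} (Frobenius reciprocity) as the main engine together with the explicit formula for $\tr^b_c$ from Definition \ref{defntrbc}.

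For part (a), when $Q<P$ the restriction map $\res^b_c$ is $\res^P_Q$, so for $[\zeta]\in\Hc^*(G,b,P_\gamma)$ we have $\tr^b_c(\res^b_c([\zeta]))=\tr^b_c(\res^P_Q([\zeta]))$. I would apply Frobenius reciprocity (Proposition \ref{propreciprtransf}(i)) with $[\tau]=1$ (the identity of $\Hc^*(H,c,Q_\delta)$, which equals $\res^P_Q$ of the identity of $\Hc^*(P,k)$): this gives $\tr^b_c(\res^b_c([\zeta])\cdot 1)=[\zeta]\cdot\tr^b_c(1)$. So it suffices to show $\tr^b_c(1)=0$ when $Q<P$. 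Unwinding the definition, $\tr^b_c(1)=t_{Y_{G,b,P_\gamma}}(\tr^P_Q(1))$ and $\tr^P_Q(1)=|P:Q|\cdot 1=0$ in $\Hc^*(P,k)$ because $p\mid |P:Q|$; since $t_{Y_{G,b,P_\gamma}}$ is $k$-linear, $\tr^b_c(1)=0$, which settles (a).

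For part (b), when $Q=P$ the inclusion $\mathcal{F}_{(P,f_P)}(H,c)\subseteq\mathcal{F}_{(P,e_P)}(G,b)$ makes $\res^b_c$ literally the inclusion $\Hc^*(G,b,P_\gamma)\hookrightarrow\Hc^*(P,k)$ followed by the (identity) ``restriction'' $\res^P_P$, so $\res^b_c$ is just the inclusion. Frobenius reciprocity with $[\tau]=1$ again gives $\tr^b_c(\res^b_c([\zeta]))=[\zeta]\tr^b_c(1)$, and now $\tr^b_c(1)=t_{Y_{G,b,P_\gamma}}(\tr^P_P(1))=t_{Y_{G,b,P_\gamma}}(1)=\sum_{g\in Y_{G,b,P_\gamma}}\tr^P_{P\cap{}^gP}\res^{{}^gP}_{P\cap{}^gP}{}^g(1)=\sum_{g}\tr^P_{P\cap{}^gP}(1)=\sum_{g}|P:P\cap{}^gP|\cdot 1$. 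The point is that $\sum_{g\in Y_{G,b,P_\gamma}}|P:P\cap{}^gP|=\sum_{g\in Y_{G,b,P_\gamma}}\dim_k k[PgP]/|P|=\dim_k(ikGi)/|P|$, using the bimodule decomposition $ikGi\cong\bigoplus_{g\in Y_{G,b,P_\gamma}}k[PgP]$ recalled in the introduction and the fact that $\dim_k k[PgP]=|PgP|=|P|\cdot|P:P\cap{}^gP|$. This gives the displayed scalar formula.

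For the commutativity of the triangle in (b): I want $\tr^b_c\circ t_{Y_{H,c,P_\delta}}=t_{Y_{G,b,P_\gamma}}$ as maps $\Hc^*(P,k)\to\Hc^*(G,b,P_\gamma)$. By \cite[Theorem 1]{Sas} applied to $c$ (valid since the Conjecture holds for $c$), $\Hc^*(H,c,P_\delta)=t_{Y_{H,c,P_\delta}}(\Hc^*(P,k))$, so $t_{Y_{H,c,P_\delta}}$ is surjective onto its target; hence for $[\xi]\in\Hc^*(P,k)$, writing $[\tau]=t_{Y_{H,c,P_\delta}}([\xi])$, what must be shown is $\tr^b_c([\tau])=t_{Y_{G,b,P_\gamma}}([\xi])$. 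From Definition \ref{defntrbc}, $\tr^b_c([\tau])=t_{Y_{G,b,P_\gamma}}(\tr^P_P([\tau]))=t_{Y_{G,b,P_\gamma}}([\tau])=t_{Y_{G,b,P_\gamma}}(t_{Y_{H,c,P_\delta}}([\xi]))$, so the identity reduces to $t_{Y_{G,b,P_\gamma}}\circ t_{Y_{H,c,P_\delta}}=t_{Y_{G,b,P_\gamma}}$ on $\Hc^*(P,k)$. Here the hypothesis $\Ker\, t_{Y_{H,c,P_\delta}}\subseteq\Ker\, t_{Y_{G,b,P_\gamma}}$ enters: it means $t_{Y_{G,b,P_\gamma}}$ factors through $t_{Y_{H,c,P_\delta}}$, say $t_{Y_{G,b,P_\gamma}}=\rho\circ t_{Y_{H,c,P_\delta}}$ for a well-defined $k$-linear $\rho$ on $t_{Y_{H,c,P_\delta}}(\Hc^*(P,k))=\Hc^*(H,c,P_\delta)$; comparing, $\rho$ must agree with $\tr^b_c$ (indeed $\rho([\tau])=t_{Y_{G,b,P_\gamma}}([\xi])$ for any preimage $[\xi]$ of $[\tau]$, and this is exactly $\tr^b_c([\tau])$ by the computation above), and then $t_{Y_{G,b,P_\gamma}}\circ t_{Y_{H,c,P_\delta}}=\rho\circ t_{Y_{H,c,P_\delta}}\circ t_{Y_{H,c,P_\delta}}$; one finishes by noting $t_{Y_{H,c,P_\delta}}$ is idempotent on $\Hc^*(P,k)$ (it is a projection onto $\Hc^*(H,c,P_\delta)$ since it is the identity on its image, which is the cohomology of the fusion system, by the characteristic biset / stable-element description). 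I expect the main obstacle to be this last point: making precise, with correct references, that $t_{Y_{H,c,P_\delta}}$ restricts to the identity on $\Hc^*(H,c,P_\delta)$ (equivalently that the associated characteristic biset is idempotent up to the relevant action), so that the kernel-containment hypothesis genuinely yields $t_{Y_{G,b,P_\gamma}}\circ t_{Y_{H,c,P_\delta}}=t_{Y_{G,b,P_\gamma}}$ and hence the triangle commutes, with surjectivity of $\tr^b_c$ following immediately from surjectivity of $t_{Y_{G,b,P_\gamma}}$.
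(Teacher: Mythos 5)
Your parts (a) and the scalar formula in (b) are correct and follow essentially the paper's own route: the paper likewise evaluates the transfer at the unit, getting $\tr^P_Q([1])=|P:Q|\,[1]=0$ when $Q<P$ and $t_{Y_{G,b,P_{\gamma}}}([1])=\sum_{g\in Y_{G,b,P_{\gamma}}}|P:P\cap{}^gP|\,[1]=\frac{\dim_k(ikGi)}{|P|}[1]$ when $Q=P$ via the bimodule decomposition of $ikGi$, and then applies Proposition \ref{propreciprtransf} (the paper uses part (ii), you use part (i) with $[\tau]=[1]$; the difference is immaterial).

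The triangle is where your proposal stops short, and the point you flag is indeed the crux. You reduce commutativity to $t_{Y_{G,b,P_{\gamma}}}\circ t_{Y_{H,c,P_{\delta}}}=t_{Y_{G,b,P_{\gamma}}}$ and then to the claim that $t_{Y_{H,c,P_{\delta}}}$ restricts to the identity on $\Hc^*(H,c,P_{\delta})$, which you do not prove. The paper argues in a slightly different form but leans on the same fact: using the splitting from \cite[Remark 1]{Sas} it writes $\Hc^*(P,k)=\Ker\, t_{Y_{H,c,P_{\delta}}}\oplus \Hc^*(H,c,P_{\delta})$, decomposes $[\zeta]=[\zeta_1]+[\zeta_2]$, and computes $\tr^b_c(t_{Y_{H,c,P_{\delta}}}([\zeta]))=\tr^b_c([\zeta_2])=t_{Y_{G,b,P_{\gamma}}}([\zeta_2])$, while $t_{Y_{G,b,P_{\gamma}}}([\zeta])=t_{Y_{G,b,P_{\gamma}}}([\zeta_2])$ by the kernel hypothesis; the first of these equalities is exactly the assertion $t_{Y_{H,c,P_{\delta}}}([\zeta_2])=[\zeta_2]$ for a stable class $[\zeta_2]$. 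Be aware that this assertion is not innocuous: by the same stability-plus-Frobenius computation used in the proof of Proposition \ref{propreciprtransf}, $t_{Y_{H,c,P_{\delta}}}$ acts on $\Hc^*(H,c,P_{\delta})$ as multiplication by the image in $k$ of $\sum_{y\in Y_{H,c,P_{\delta}}}|P:P\cap{}^yP|=\dim_k(jkHj)/|P|$, which modulo $p$ equals the number of $y\in Y_{H,c,P_{\delta}}$ normalizing $P$ (at least $|N_H(P_{\delta}):PC_H(P)|$) and need not equal $1$ in $k$. So the projection property you hoped to quote is equivalent to this scalar being $1$; your proposal leaves it open, and to close the argument you must either justify it (or cite the precise splitting statement of \cite[Remark 1]{Sas} the paper relies on), or be content with commutativity of the triangle up to this nonzero scalar. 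The final surjectivity remark is fine once commutativity is in place.
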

\begin{proof}
Let $[1]$ be the unity of $\Hc^*(G,b,P_{\gamma})$ and $\Hc^*(H,c,Q_{\delta})$. It is clear from Definition \ref{defntrbc} that
$$\tr_c^b([1])=\sum_{g\in Y_{G,b,P_{\gamma}}}\tr^P_{P\cap^gP}\res^{^gP}_{P\cap^gP}{}^g(\tr^P_Q([1]))$$
is equal to $0$ if $Q<P$ or it is equal to
$$\sum_{g\in Y_{G,b,P_{\gamma}}}|P:P\cap{}^gP|[1]=\frac{\dim_k ikGi}{|P|}[1],$$
if $Q=P$. Then, by Proposition \ref{propreciprtransf}, (ii) we have
$$\tr_c^b(\res_c^b([\zeta]))=\tr_c^b([1])[\zeta]$$
which is equal to $0$ if $Q<P$ or it is equal to $\frac{\dim_k ikGi}{|P|}[\zeta]$ if $Q=P$.

Since the Conjecture holds for both $b$ and $c$ the next two short exact sequences split as sequences of $\Hc^*(G,b,P_{\gamma})$-modules, respectively of $\Hc^*(H,c,P_{\delta})$-modules; see \cite[Remark 1]{Sas}
$$\xymatrix{0 \ar[r]&\Ker t_{Y_{G,b,P_{\gamma}}}\ar@{^{(}->}[r]
&\Hc^*(P,k)\ar@{->>}[rr]^{t_{Y_{G,b,P_{\gamma}}}}&&\Hc^*(G,b,P_{\gamma})\ar[r] &0},$$
    $$\xymatrix{0 \ar[r]&\Ker t_{Y_{H,c,P_{\delta}}}\ar@{^{(}->}[r]
&\Hc^*(P,k)\ar@{->>}[rr]^{t_{Y_{H,c,P_{\delta}}}}&&\Hc^*(H,c,P_{\delta})\ar[r] &0},$$
hence $$\Hc^*(P,k)=\Ker~t_{Y_{H,c,P_{\delta}}}\bigoplus \Hc^*(H,c,P_{\delta})=\Ker~ t_{Y_{G,b,P_{\gamma}}}\bigoplus \Hc^*(G,b,P_{\gamma})$$ as graded $k$-vector spaces. Now we take $$[\zeta]=[\zeta_1]+[\zeta_2]\in\Ker~t_{Y_{H,c,P_{\delta}}}\bigoplus \Hc^*(H,c,P_{\delta})$$ and we obtain
$$\tr_c^b(t_{Y_{H,c,P_{\delta}}}([\zeta_1]+[\zeta_2]))=\tr_c^b([\zeta_2])=t_{Y_{G,b,P_\gamma}}([\zeta_2]).$$
Since $\Ker~t_{Y_{H,c,P_{\delta}}}\subseteq \Ker~t_{Y_{G,b,P_{\gamma}}}$ we get
$$t_{Y_{G,b,P_{\gamma}}}([\zeta_1]+[\zeta_2])=t_{Y_{G,b,P_{\gamma}}}([\zeta_2]).$$

\end{proof}
\begin{remark} From Proposition \ref{proptranslaws}, (ii) we can notice the fact that the name "transfer map" in Definition \ref{defntrbc} is more suitable for blocks $b,c$ for which the Conjecture of Sasaki is true and for which we have $\Ker t_{Y_{H,c,P_{\delta}}}\subseteq \Ker t_{Y_{G,b,P_{\gamma}}}$.
\end{remark}
We end with the proof of Theorem \ref{thmcomptran}.

\textbf{Proof of Theorem \ref{thmcomptran}}
 First we prove the commutativity of the next diagram
 \begin{displaymath}
 \xymatrix{\Hc^*(H,c,P_{\delta})\ar[rr]^{ \delta_P}\ar[d]^{\tr^b_c} && \HH^*(kP) \ar[d]^{t_{ikGi}} \\
                                                     \Hc^*(G,b,P_{\gamma})\ar[rr]^{\delta_P}
                                                     &&\HH^*(kP).
                                            }.
\end{displaymath} Let $[\zeta]\in \Hc^*(H,c,P_{\delta})$. The following equalities hold
\begin{align*}
(\delta_P\circ\tr^b_c)([\zeta])&=\sum_{g\in Y_{G,b,P_{\gamma}}}(\delta_P\circ \tr^P_{P\cap^gP})(\res^{^gP}_{P\cap ^gP}({}^g[\zeta])\\&=\sum_{g\in Y_{G,b,P_{\gamma}}}(t_{_PkP_{P\cap {}^gP}}\circ\delta_P\circ\res^{^gP}_{P\cap ^gP})({}^g[\zeta])\\&=\sum_{g\in Y_{G,b,P_{\gamma}}}(t_{_PkP_{P\cap {}^gP}}\circ t_{_{P\cap ^gP}k[^gP]_{^gP}}\circ\delta_P)({}^g[\zeta])\\&=\sum_{g\in Y_{G,b,P_{\gamma}}}t_{_PkP\otimes_{k[P\cap ^gP]}k[^gP]_{^gP}}(c_{g,P}(\delta_{P}([\zeta])))\\&=\sum_{g\in Y_{G,b,P_{\gamma}}}t_{_PkP\otimes_{k[P\cap ^gP]}k[^gP]\otimes_{k[^gP]}k[gP]_P}(\delta_P([\zeta]))\\&=\sum_{g\in Y_{G,b,P_{\gamma}}}t_{_Pk[PgP]_P}(\delta_P([\zeta]))\\&=(t_{ikGi}\circ\delta_P)([\zeta]),
\end{align*}
where the fourth and sixth equality follow from Proposition \ref{propcompconjdelta} and Lemma \ref{lemkgh}; the other equalities are straightforward applications of properties of transfer maps from \cite{LiTr}. Since by \cite[Theorem 1]{Sas} $$\delta_P:\Hc^*(H,c,P_{\delta})\rightarrow \HH^*_{B^*}(kP)\cap \delta_P(\Hc^*(P,k))$$ is an isomorphism and $$\mathcal{F}_{(P,f_P)}(H,c)\subseteq \mathcal{F}_{(P,e_P)}(G,b),$$ the above diagram and \cite[Theorem 2]{Sas} assure us that $$\Ima(T)\subseteq \HH^*_{A^*\otimes_{kGb}L\otimes_{kHc}B}(kP).$$ Consequently, the above diagram gives us
\begin{displaymath}
 \xymatrix{\Hc^*(H,c,P_{\delta})~~\ar@{>->>}[r]^{\delta_P~~~~~~~~~~~~~~}\ar[d]^{\tr_c^b} &\HH^*_{B^*}(kP)\cap \delta_P(\Hc^*(P,k)) \ar[r]^{T}&\HH^*_{B^*\otimes_{kHc}L^*\otimes_{kGb}A}(kP)\ar@{=}[d] \\
          \Hc^*(G,b,P_{\gamma})\ar@{>->}[rr]^{\delta_P}&&\HH^*_{A^*\otimes_{kGb}L\otimes_{kHc}B}(kP)}.
\end{displaymath}

Furthermore, we glue this diagram with the commutative diagrams  from the center of \cite[Theorem 4, (6)]{Sas}
 \begin{displaymath}
 \xymatrix{\HH^*_{B^*\otimes_{kHc}L^*\otimes_{kGb}A}(kP)~\ar@{>->>}[r]^{R_B}\ar@{=}[d] &\HH^*_{L^*}(kHc)\cap \HH^*_B(kHc)\ar@{^{(}->}[r]\ar@{>->>}[d]^{R_L} & \HH^*_{L^*}(kHc)~~\ar@{>->>}[d]^{R_L} \\
                                                     \HH^*_{A^*\otimes_{kGb}L\otimes_{kHc}B}(kP)~~\ar@{>->>}[r]^{R_A}&\HH^*_{L\otimes_{kHc}B}(kGb)\ar@{^{(}->}[r]
                                                     &\HH^*_L(kGb)
                                            }
\end{displaymath}
to obtain the conclusion.


\end{document}